\newtheorem{thm}{Theorem}[section]
\newtheorem{la}[thm]{Lemma}
\newtheorem{Defn}[thm]{Definition}
\newtheorem{Remark}[thm]{Remark}
\newtheorem{prop}[thm]{Proposition}
\newtheorem{Number}[thm]{\!\!}
\newenvironment{defn}{\begin{Defn}\rm}{\end{Defn}}
\newenvironment{rem}{\begin{Remark}\rm}{\end{Remark}}
\newenvironment{numba}{\begin{Number}\rm}{\end{Number}}
\newenvironment{proof}{{\noindent\bf Proof.}}%
                  {\nopagebreak\hspace*{\fill}$\Box$\medskip\medskip\par}   
\newcommand{\Punkt}{\nopagebreak\hspace*{\fill}$\Box$}
\newcommand{\wb}{\overline}
\newcommand{\ve}{\varepsilon}
\newcommand{\at}{\symbol{'100}}
\newcommand{\wt}{\widetilde}
\newcommand{\mto}{\mapsto}
\newcommand{\isom}{\cong}
\DeclareMathOperator{\Ad}{Ad}
\newcommand{\N}{{\mathbb N}}
\newcommand{\R}{{\mathbb R}}
\newcommand{\K}{{\mathbb K}}
\newcommand{\C}{{\mathbb C}}
\newcommand{\cO}{{\cal O}}
\newcommand{\cg}{{\mathfrak g}}
\newcommand{\pl}{{\displaystyle \lim_{\longleftarrow}}}
\DeclareMathOperator{\Aut}{Aut}
\newcommand{\sub}{\subseteq}
\DeclareMathOperator{\im}{im}
\newcommand{\cL}{{\cal L}}
\DeclareMathOperator{\Lip}{Lip}
\DeclareMathOperator{\ev}{ev}
\DeclareMathOperator{\op}{op}
\DeclareMathOperator{\Evol}{Evol}
\DeclareMathOperator{\evol}{evol}
\DeclareMathOperator{\sym}{sym}
\begin{document}
$\;$\\[-27mm]
\begin{center}
{\Large\bf When unit groups of continuous inverse\\[3mm]
algebras are regular Lie groups}\\[7mm]
{\bf Helge Gl\"{o}ckner and Karl-Hermann Neeb}\vspace{4mm}
\end{center}
\begin{abstract}\noindent
It is a basic fact in infinite-dimensional Lie theory
that the unit group $A^\times$ of a continuous inverse
algebra $A$ is a Lie group.
We describe criteria ensuring that the Lie group $A^\times$
is regular in Milnor's sense. Notably, $A^\times$
is regular if $A$ is Mackey-complete and locally m-convex.
\vspace{1mm}
\end{abstract}
{\footnotesize {\em Classification}:
22E65 (primary);
%
32A12,
%
34G10,
%
46G20,
46H05,
%
58B10.\\[1mm]
%
{\em Key words}: Continuous inverse algebra, Q-algebra, Waelbroeck algebra,
locally m-convex algebra,
infinite-dimensional Lie group, regular Lie group, regularity, left logarithmic derivative, product integral, evolution, initial value problem, parameter dependence}\\[6mm]
\noindent
{\bf Introduction and statement of the main result}\\[3mm]
A locally convex, unital, associative topological algebra $A$ over $\K\in \{\R,\C\}$
is called a \emph{continuous inverse algebra}
if its group $A^\times$ of invertible elements is open
and the inversion map $\iota\colon A^\times \to A$, $x\mto x^{-1}$
is continuous (cf.\ \cite{Wae}).
Then $\iota$ is $\K$-analytic
and hence $A^\times$ is a $\K$-analytic Lie group~\cite{ALG}.
Our goal is to describe conditions ensuring that the Lie group $A^\times$
is well-behaved, i.e., a regular Lie group in the sense of Milnor~\cite{Mil}.\\[2.5mm]
To recall this notion,
let $G$ be a Lie group modelled on a locally convex space~$E$,
with identity element~$1$, its tangent bundle $TG$
and the Lie algebra $\cg:=T_1G\isom E$.
Given $g\in G$ and $v\in T_1G$,
let $\lambda_g\colon G\to G$, $x\mto gx$ be left translation
by~$g$ and $gv:=T_1(\lambda_g)(v)\in T_gG$.
If $\gamma\colon [0,1]\to\cg$ is a continuous map,
then there exists at most one
$C^1$-map $\eta\colon [0,1]\to G$
such that
\[
\eta'(t)=\eta(t)\gamma(t)\quad\mbox{for all $\,t\in [0,1]$, \,and\, $\eta(0)=1$.}
\]
If such an $\eta$ exists, it is called the \emph{evolution of $\gamma$}.
The Lie group $G$ is called \emph{regular}
if each $\gamma\in C^\infty([0,1],\cg)$ admits an evolution $\eta_\gamma$,
and the map $\evol\colon C^\infty([0,1],\cg)\to G$, $\gamma\mto \eta_\gamma(1)$
is smooth (see \cite{Mil} and \cite{Nee},
where also many applications of regularity are described).
If $G$ is regular, then its modelling space~$E$ is \emph{Mackey-complete} in the sense
that every Lipschitz curve in $E$ admits a Riemann integral\footnote{See
\cite{KaM} for a detailed discussion of this property.}
(as shown in \cite{GaN}). It is a notorious open problem whether, conversely, every Lie group
modelled on a Mackey-complete locally convex space is regular
(\cite[Problem II.2]{Nee}; cf.\,\cite{Mil}).\\[2.7mm]
As a tool for the discussion of $A^\times$,
we let $\mu_n\colon A^n\to A$ be the $n$-linear map
defined via $\mu_n(x_1,\ldots, x_n):=x_1x_2\cdots x_n$,
for $n\in \N$.
Given seminorms $p,q\colon A\to [0,\infty[$, we define $\wb{B}^q_1(0):=\{x\in A\colon q(x)\leq 1\}$
and
\[
\|\mu_n\|_{p,q}:=\sup\{p(\mu_n(x_1,\ldots, x_n))\colon x_1,\ldots, x_n\in \wb{B}^q_1(0)\}
\in [0,\infty]\,.
\]
Our regularity criterion now reads as follows:\\[4mm]
{\bf Theorem.}
\emph{Let $A$ be a Mackey-complete continuous inverse algebra,
such that the following condition is satisfied}:
\begin{itemize}
\item[($*$)]
\emph{For each continuous seminorm~$p$ on~$A$,
there exists a continuous seminorm~$q$ on~$A$
and $r>0$
$($which may depend on $p$ and $q)$
such that}
\[
\sum_{n=1}^\infty r^n\|\mu_n\|_{p,q}\; <\; \infty\, .
\]
\end{itemize}
\emph{Then $A^\times$ is a regular Lie group in Milnor's sense.}\\[4mm]
$A^\times$ even satisfies certain stronger
regularity properties (see Proposition~\ref{stronger}).
Of course, by Hadamard's formula for the radius of convergence of a power series,
condition ($*$) is equivalent to\footnote{If $\|\mu_n\|_{p,q}<\infty$, then also $\|\mu_k\|_{p,q}<\infty$
for all $k\in \{1,\ldots, n\}$.
In fact, $\|\mu_k\|_{p,q}\leq q(1)^{n-k}\|\mu_n\|_{p,q}$
since $\mu_k(x_1,\ldots, x_k)=\mu_n(1,\ldots,1,x_1,\ldots, x_k)$.}
$\, {\displaystyle\limsup_{n\to\infty}}\sqrt[n]{\|\mu_n\|_{p,q}}<\infty$.
It is also equivalent to the existence of $M\in [0,\infty[$
such that $\|\mu_n\|_{p,q}\leq M^n$ for all $n\in \N$.\\[2mm]
{\bf Remark A.} The authors do not know whether condition ($*$)
can be\linebreak
omitted, i.e., whether $A^\times$ is regular for
every Mackey-complete continuous inverse algebra~$A$.
Here are some preliminary considerations:\\[2.5mm]
If $A$ is a continuous inverse algebra, then the map $\pi_n\colon A\to A$, $x\mto x^n$
is a continuous homogeneous polynomial of degree $n$,
for each $n\in \N_0$.
It is known that the analytic inversion map $\iota\colon A^\times\to A$
is given by Neumann's series, $\iota(1-x)=\sum_{n=0}^\infty x^n=\sum_{n=0}^\infty \pi_n(x)$,
for $x$ in some $0$-neighbourhood of $A$ \cite[Lemma~3.3]{ALG}.
Hence, for each continuous seminorm~$p$ on~$A$, there exists a continuous seminorm~$q$ on~$A$ and $s>0$
such that\vspace{-.3mm}
\[
\sum_{n=1}^\infty s^n\|\pi_n\|_{p,q}\; <\; \infty\,,\vspace{-.3mm}
\]
where $\|\pi_n\|_{p,q}:=\sup\{p(\pi_n(x))\colon x\in \wb{B}^q_1(0)\}$
(cf.\ \cite[Proposition~5.1]{BaS}).\footnote{If $\K=\R$,
we can apply the proposition to $A_\C$,
which is a complex continuous inverse algebra
(see, e.g., \cite[Proposition~3.4]{ALG}).}
Let $S_n$ be the symmetric group of all permutations of $\{1,\ldots, n\}$
and $\mu_n^{\sym}\colon A^n\to A$, $(x_1,\ldots, x_n)\mto \frac{1}{n!}\sum_{\sigma\in S_n}x_{\sigma(1)}\cdots x_{\sigma(n)}$
be the symmetrization of $\mu_n$. Then $\pi_n(x)=\mu_n^{\sym}(x,\ldots,x)$
and thus $\|\mu_n^{\sym}\|_{p,q}\leq \frac{n^n}{n!}\|\pi_n\|_{p,q}$
by the Polarization Formula
(in the form \cite[p.\,34,\,(2)]{Her}).
Since $\lim_{n\to\infty}\frac{n}{\sqrt[n]{n!}}=e$
is Euler's constant
(as a consequence of Stirling's Formula),
it follows that\vspace{-.3mm}
\begin{equation}\label{forabel}
\sum_{n=1}^\infty \, t^n\|\mu^{\sym}_n\|_{p,q}\;<\; \infty\quad\mbox{for each $\,t\in\, ]0,\frac{s}{e}[$.}\vspace{-.3mm}
\end{equation}
In general, it is not clear how one could give good estimates
for $\|\mu_n\|_{p,q}$ in terms of $\|\mu_n^{\sym}\|_{p,q}$.
Hence, it does not seem to be clear in general whether
(\ref{forabel}) implies the existence of some $r>0$ with ($*$).\\[3mm]
However, ($*$) is satisfied in important cases.
Following \cite{EAM}, a topological algebra $A$ is called
\emph{locally m-convex} if its topology arises from a set of seminorms $q$
which are \emph{submultiplicative}, i.e., $q(xy)\leq q(x)q(y)$
for all $x,y\in A$.\\[3.6mm]
{\bf Corollary.} \emph{Let $A$ be a Mackey-complete continuous inverse
algebra. If $A$ is commutative or locally
m-convex,
then $A^\times$ is a regular Lie group.}\\[4mm]
\begin{proof}
In fact, if $A$ is commutative, then $\mu_n=\mu_n^{\sym}$,
whence ($*$) is satisfied with any $r\in \,]0,s/e[$ as in (\ref{forabel}).
Therefore the theorem applies.\footnote{Alternative proof:
$(A,+)$ is regular, as it is Mackey complete \cite[Proposition~II.5.6]{Nee}.
Since $\exp\colon A\to A^\times$ is a homomorphism
of groups (as $A^\times$ is abelian) and a local diffeomorphism (see \cite[Theorem~5.6]{ALG}),
it follows that also $A^\times$ is regular \cite[Proposition~3]{Rob}.}\\[2.5mm]
If $A$ is locally m-convex, then
for every continuous seminorm~$p$ on~$A$, there is a submultiplicative
continuous seminorm $q$ on~$A$ such that $p\leq q$.
Using the submultiplicativity,
we see that $\|\mu_n\|_{p,q}\leq \|\mu_n\|_{q,q}\leq 1$.
Thus ($*$) is satisfied with any $r\in \,]0,1[$,
and the theorem applies.
\end{proof}
\noindent
It can be shown that every Mackey-complete, commutative continuous inverse
algebra is locally m-convex (cf.\ \cite{Tur}).\vspace{3mm}

\noindent
{\bf Remark~B.}
We mention that there is a quite direct, alternative proof for the corollary
if $A$ is locally m-convex and \emph{complete}.\footnote{Then $A=\pl \,A_q$\vspace{-.4mm} is a projective limit of Banach algebras
(where $q$ ranges through the set of all submultiplicative continuous seminorms on~$A$).
Being a Banach-Lie group, each $A_q^\times$ is regular~\cite{Mil}.
Then $C^\infty([0,1],A)=\pl \, C^\infty([0,1],A_q)$\vspace{-.7mm} and $\evol_{A^\times}=\pl \evol_{A_q^\times}$
is a smooth evolution (cf.\ \cite[Lemma~10.3]{BGN}).}
The easier arguments fail however if~$A$ is not complete,
but merely sequentially complete or Mackey-complete.
By contrast, our more elaborate method does not require that~$A$ be complete:
Mackey-completeness suffices.\\[2.7mm]
{\bf Remark C.}
Our theorem is a variant of the (possibly too optimistic)\linebreak
Theorem~IV.1.11 announced in the survey \cite{Nee},
and its proof expands the sketch of proof given there.
To avoid the difficulties described in Remark~A,
we added
condition~($*$).\\[2.5mm]
{\bf Remark~D.}
It is known that
unit groups of Mackey-complete
continuous inverse algebras are so-called \emph{BCH-Lie groups} \cite[Theorem~5.6]{ALG},
i.e., they admit an exponential function which is an analytic
diffeomorphism around~$0$ (see \cite{GCX}, \cite{Nee}, \cite{Rob}
for information on such groups).
Inspiration for the studies came from an article
by Robart~\cite{Rob}.
He pursued the (possibly too optimistic) larger goal
to show that every BCH-Lie group with Mackey-complete
modelling space is regular. However, there seem to be gaps
in his arguments.\footnote{For example,
it is unclear whether the limit $\gamma_u$
constructed in the proof of \cite[Proposition~7]{Rob}
takes its values in $\Aut(\cL)$
(as observed by K.-H. Neeb),
and no explanation is given how a smooth curve $g$
in the local group with $\Ad(g)=\gamma_u$ can be obtained.}\\[4mm]
{\bf Remark~E.}
The following questions are open:
\begin{itemize}
\item[(a)]
Are there examples of Mackey-complete continuous inverse
algebras which satisfy ($*$) but are not locally m-convex? Or even:
\item[(b)]
Does every Mackey-complete continuous inverse
algebra satisfy~($*$)\,?
\end{itemize}
\section{Notation and preparatory results}\label{secone}
\noindent
In this section, we fix some notation and formulate
preparatory results.\\[2,7mm]
{\bf Basic notation.}
Let $\N=\{1,2,\ldots\}$ and $\N_0:=\N\cup\{0\}$.
If $X$ is a set and $n\in \N$, we write
$X^n:=X\times\cdots\times X$ (with $n$ factors).
If $f\colon X\to Y$ is a map,
we abbreviate $f^n:=f\times\cdots\times f\colon X^n\to Y^n$, $(x_1,\ldots, x_n)\mto (f(x_1),\ldots, f(x_n))$.\linebreak
If $(E,\|.\|_E)$ and $(F,\|.\|_F)$ are normed spaces
and $\beta\colon E^n\to F$ is a\linebreak
continuous
$n$-linear map, we write $\|\beta\|_{\op}$ for its operator norm, defined as usual as
$\sup\{\|\beta(x_1,\ldots, x_n)\|_F\colon x_1,\ldots, x_k\in E, \|x_1\|_E,\ldots, \|x_n\|_E\leq 1\}$.
If $E$ is a locally convex space, we let $P(E)$ be the set of all continuous
seminorms on~$E$. If $p\in P(E)$, we consider the factor space $E_p:=E/p^{-1}(0)$
as a normed space with the norm $\|.\|_p$ given by $\|x+p^{-1}(0)\|_p:=p(x)$.
Then the canonical map $\pi_p\colon E\to E_p$, $x\mto x+p^{-1}(0)$ is linear and continuous,
with $\|\pi_p(x)\|_p=p(x)$.\\[2.7mm]
{\bf Weak integrals.}
Recall that if $E$ is a locally convex space, $a\leq b$ are reals
and $\gamma\colon [a,b]\to E$ a continuous map,
then the weak integral $\int_a^b\gamma(s)\,ds$
(if it exists)
is the unique element of $E$ such that $\lambda(\int_a^b\gamma(s)\,ds)=\int_a^b\lambda(\gamma(s))\,ds$
for each continuous linear functional $\lambda$ on~$E$.
If $\alpha\colon E\to F$ is a continuous linear map between locally convex spaces
and $\int_a^b\gamma(s)\, ds$ (as before) exists in~$E$,
then also $\int_a^b\alpha(\gamma(s))\,ds$ exists in~$F$ and is given by
\begin{equation}\label{linweakint}
\int_a^b\alpha(\gamma(s))\,ds=\alpha\left(
\int_a^b\gamma(s)\,ds\right)
\end{equation}
(see, e.g., \cite{GaN} for this observation).
If $E$ is sequentially complete, then $\int_a^b\gamma(s)\,ds$ always exists (cf.\ \cite[Lemma~1.1]{BaS}
or \cite[1.2.3]{Her}).\\[3mm]
{\bf\boldmath$C^r$-curves.}
Let $r\in \N_0\cup\{\infty\}$.
As usual, a \emph{$C^r$-curve} in a locally convex space~$E$ is a
continuous function $\gamma\colon I\to E$
such that the derivatives $\gamma^{(k)}\colon I\to E$ of order $k$
exist for all $k\in \N$ with $k\leq r$,
and are continuous (see, e.g., \cite{GaN} for more details).
The $C^\infty$-curves are also called \emph{smooth curves}.\\[3mm]
{\bf Smooth maps.}
If $E$ and $F$ are real locally convex spaces, $U\sub E$ is an open subset
and $r\in \N_0\cup\{\infty\}$,
then a function $f\colon U\to F$ is called $C^r$ if is continuous,
the iterated directional derivatives $d^{(k)}f(x,y_1,\ldots,y_k):=(D_{y_k}\cdots D_{y_1}f)(x)$
exist for all $k\in \N$ such that $k\leq r$, $x\in U$ and $y_1,\ldots, y_k\in E$,
and define continuous functions $d^{(k)}f\colon U\times E^k\to F$.
If $U$ is not open, but a convex (or locally convex) subset of~$E$ with dense
interior~$U^0$, we say that $f$ is $C^r$ if it is continuous,
$f|_{U^0}$ is $C^r$ and $d^{(k)}(f|_{U^0})\colon U^0\times E^k\to F$
has a continuous extension $d^{(k)}f\colon U\times E^k\to F$ for each $k\in \N$ such that $k\leq r$.
The $C^\infty$-maps are also called \emph{smooth}.
Abbreviate $df:=d^{(1)}f$.
It is known that the Chain Rule holds in the form $d(f\circ g)(x,y)=df(g(x),dg(x,y))$,
and that compositions of $C^r$-maps are $C^r$.
Moreover, a $C^0$-curve $\gamma\colon I\to E$ is a $C^r$-curve
if and only if it is a $C^r$-map, in which case $\gamma'(t)=d\gamma(t,1)$
(see \cite{GaN} for all of these basic facts; cf.\ also \cite{Mic}, \cite{Mil},
and \cite{RES}).\\[3mm]
{\bf Analytic maps.}
If $E$ and $F$ are complex locally convex spaces
and $n\in \N$, then a function $p\colon E\to F$
is called a \emph{continuous homogeneous polynomial}
of degree $n\in \N_0$ if $p(x)=\beta(x,\ldots,x)$
for some continuous $n$-linear map $\beta\colon E^n\to F$
(if $n=0$, this means a constant function).
A map $f\colon U\to F$ on an open set $U\sub E$
is called \emph{complex analytic} (or $\C$-analytic) if it is continuous and
for each
$x\in U$, there is a $0$-neighbourhood $Y\sub E$ with $x+Y\sub U$
and continuous homogeneous
polynomials $p_n\colon E\to F$ of degree~$n$,
such that
\[
(\forall y\in Y)\quad f(x+y)=\sum_{n=0}^\infty p_n(y)
\]
(see \cite{BaS}, \cite{RES} and \cite{GaN} for further information).
Following \cite{Mil}, \cite{RES} and \cite{GaN} (but deviating from \cite{BaS}),
given real locally convex spaces $E, F$,
we call a function $f\colon U\to F$ on an open set $U\sub E$ \emph{real analytic}
(or $\R$-analytic) if it extends to a complex analytic map $V\to F_\C$,
defined on some open subset $V\sub E_\C$ of the complexification
of~$E$, such that $U\sub V$.
For $\K\in \{\R,\C\}$,
it is known that compositions of $\K$-analytic maps are $\K$-analytic.
Every $\K$-analytic map is smooth (see, e.g.,  \cite{GaN} or \cite{RES}
for both of these facts).\\[2.7mm]
We shall use the following lemma (proved in Appendix~A):
\begin{la}\label{special}
Let $E$ and $F$ be complex locally convex spaces, $\wt{F}$ be a completion of $F$
such that $F\sub \wt{F}$ as a dense vector subspace,
and $p_n\colon E\to F$ be continuous homogeneous polynomials of degree~$n$,
for $n\in \N_0$. Assume that
\[
f(x):=\sum_{n\in \N_0}p_n(x)\vspace{-2mm}
\]
converges in $\wt{F}$ for all $x$ in a balanced, open $0$-neighbourhood
$U\sub E$, and $f\colon U\to\wt{F}$ is continuous.
If $F$ is Mackey-complete, then $f(x)\in F$
for all $x\in U$ and $f\colon U\to F$ is $\C$-analytic.
\end{la}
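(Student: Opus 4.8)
The plan is to separate two issues: first, that $f$ actually takes values in $F$ (and that $\sum_n p_n$ converges to $f$ in $F$, not merely in $\wt F$); second, that $f\colon U\to F$ is $\C$-analytic, which I would deduce from $\C$-analyticity of $f$ as a map into the complete space $\wt F$ --- the latter being the classical setting of a power series converging on a balanced open $0$-neighbourhood. Mackey-completeness of $F$ will enter in exactly two ways: through a Mackey--Cauchy estimate for the partial sums of $\sum_n p_n(x)$, and through the fact --- which is the definition of Mackey-completeness recalled in the introduction --- that a Lipschitz curve in $F$ has a Riemann integral in $F$.

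For the first issue, fix $x\in U$. Since $U$ is open and balanced, $\{\zeta\in\C\colon\zeta x\in U\}$ is an open balanced neighbourhood of $1$ in $\C$, hence an open disc of some radius $R_x>1$; picking a real $\tau\in\,]1,R_x[$, the series $\sum_n\tau^n p_n(x)$ converges in $\wt F$ by hypothesis, so $\tau^n p_n(x)\to 0$ in $\wt F$, hence in $F$ (which carries the subspace topology). Consequently the closed absolutely convex hull $B$ in $F$ of $\{\tau^n p_n(x)\colon n\in\N_0\}$ is bounded, and as $p_n(x)\in\tau^{-n}B$ we get $\sum_{n=M+1}^N p_n(x)\in\big(\textstyle\sum_{n>M}\tau^{-n}\big)B\sub\mu_M B$ with $\mu_M:=\tau^{-M}/(\tau-1)\to 0$. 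Thus the partial sums of $\sum_n p_n(x)$ form a Mackey--Cauchy sequence in $F$; as $F$ is Mackey-complete they converge in $F$, and the limit is $f(x)$, because they already converge to $f(x)$ in $\wt F$ and $F\hookrightarrow\wt F$ is a continuous injection. Hence $f(x)\in F$ and $f(x)=\sum_n p_n(x)$ in $F$.

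For $\C$-analyticity, I would first note that $f\colon U\to\wt F$ is continuous and given on the balanced open set $U$ by the convergent series $\sum_n p_n$ of continuous homogeneous polynomials into the complete space $\wt F$, so that $f\colon U\to\wt F$ is $\C$-analytic by the standard theory of analytic maps (cf.\ \cite{BaS}); in particular $f$ is $C^1$ into $\wt F$, and around any $x_0\in U$ there is a $0$-neighbourhood $Y$ and a radius $\rho>1$ with $x_0+\mu y\in U$ whenever $y\in Y$ and $|\mu|\le\rho$ (a neighbourhood indeed, since $\{x_0+\mu y\colon|\mu|\le\rho\}$ is compact and $U$ is open), together with a Taylor expansion $f(x_0+y)=\sum_{k\ge 0}q_k(y)$ for $y\in Y$, where $q_k\colon E\to\wt F$ is the continuous homogeneous polynomial of degree $k$ given by the weak integral $q_k(y)=\frac{1}{2\pi i}\oint_{|\mu|=\rho}f(x_0+\mu y)\,\mu^{-k-1}\,d\mu$. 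Parametrising the circle by $\mu=\rho e^{i\theta}$, the corresponding integrand $[0,2\pi]\to\wt F$ is a bounded smooth scalar function times the composite of a smooth curve into $U$ with the $C^1$-map $f$, hence a Lipschitz curve, and by the first part it has values in $F$; so, $F$ being Mackey-complete, its Riemann integral lies in $F$ and agrees with the weak integral, whence $q_k(y)\in F$. Therefore $q_k(E)\sub F$, and polarisation produces a continuous $k$-linear map $E^k\to F$ representing $q_k$, so $q_k$ is a continuous homogeneous polynomial into $F$ (its continuity into $F$ following from the subspace topology). Finally, the Cauchy estimate $\wt q(q_k(y))\le\rho^{-k}\sup_{|\mu|=\rho}\wt q(f(x_0+\mu y))$ (for each continuous seminorm $\wt q$ on $\wt F$, with finite supremum by compactness) together with $\rho>1$ shows, just as in the first part, that the partial sums of $\sum_k q_k(y)$ are Mackey--Cauchy in $F$, hence converge in $F$ --- necessarily to $f(x_0+y)$. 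This furnishes the required local expansion, so $f\colon U\to F$ is $\C$-analytic.

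The main obstacle is keeping the relevant limits and integrals inside the possibly incomplete space $F$. In the first part this is the Mackey--Cauchy estimate, which works because balancedness of $U$ forces convergence of $\sum_n p_n(x)$ at a radius exceeding $1$. In the analyticity part the delicate point is that the integrand defining $q_k$ must be recognised as genuinely Lipschitz (not merely continuous) as a curve into $F$ --- this uses that $f$, being $\C$-analytic into $\wt F$, is $C^1$ and hence locally Lipschitz --- so that the characterisation of Mackey-completeness via Riemann integrals of Lipschitz curves becomes applicable. If one prefers not to quote the classical result that $f$ is $\C$-analytic into $\wt F$, one can establish this directly by reducing to scalar-valued functions via $\wt F'$ and the Hahn--Banach theorem.
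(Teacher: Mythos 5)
Your proposal is correct and follows essentially the same route as the paper's own proof: a Mackey--Cauchy argument for the partial sums of $\sum_n p_n(x)$ using a radius exceeding $1$ obtained from the balancedness of $U$, then analyticity of $f$ into $\wt{F}$ via \cite{BaS}, and finally the Cauchy integral formula for the Taylor coefficients, recognised as integrals of Lipschitz curves with values in $F$ and hence lying in $F$ by Mackey-completeness. The only (harmless) additions are your explicit polarisation step and the second Mackey--Cauchy estimate for the Taylor series, which the paper bypasses since a series of elements of $F$ converging in $\wt{F}$ to a limit already known to lie in $F$ automatically converges in the subspace topology of $F$.
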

{\bf Function spaces.}
If $E$ is a locally convex space and $r\in \N_0\cup\{\infty\}$,
let $C^r([0,1],E)$ be the space of all $C^r$-maps from $[0,1]$ to $E$.
We endow $C^r([0,1],E)$ with the locally convex vector topology defined
by the seminorms $\|.\|_{C^k,p}$ given by
\[
\|\gamma\|_{C^k,p}:=\max_{j=0,\ldots,k}\max_{t\in [0,1]}p(\gamma^{(j)}(t)),
\]
for $p$ in the set of continuous seminorms on~$E$
and $k\in \N_0$ with $k\leq r$.
We abbreviate $C([0,1],E):=C^0([0,1],E)$.
Three folklore lemmas concerning these function spaces will be used.
(The proofs can be found in Appendix~A).
\begin{la}\label{embfuncs}
Let $E$ and $F$ be locally convex spaces
$\alpha\colon E\to F$
be a continuous linear map, and $r\in \N_0\cup\{\infty\}$.
Then also the map
\[
\alpha_*:=C^r([0,1],\alpha)\colon C^r([0,1],E)\to C^r([0,1],F), \quad
\gamma\mto\alpha\circ\gamma
\]
is continuous and linear.
If $\alpha$ is a
topological embedding $($i.e., a homeomorphism onto its image$)$,
then also $\alpha_*$ is a topological embedding.
\end{la}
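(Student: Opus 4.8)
The plan is to verify well-definedness and linearity, then to compute how $\alpha_*$ acts on the defining seminorms of these function spaces; this settles continuity at once and, under the embedding hypothesis, the embedding property as well. First I would record that $\alpha$, being continuous and linear, is smooth with $d\alpha(x,y)=\alpha(y)$; since compositions of $C^r$-maps are $C^r$, the curve $\alpha\circ\gamma$ is $C^r$ whenever $\gamma\in C^r([0,1],E)$, so that $\alpha_*$ is well-defined, and it is visibly linear. An induction on $j$ then gives $(\alpha\circ\gamma)^{(j)}=\alpha\circ\gamma^{(j)}$ for all $j\in\N_0$ with $j\le r$: for a $C^1$-curve $\delta$ one has, by the Chain Rule, $(\alpha\circ\delta)'(t)=d(\alpha\circ\delta)(t,1)=d\alpha(\delta(t),d\delta(t,1))=\alpha(\delta'(t))$, and this is applied to $\delta:=\gamma^{(j-1)}$.

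Next comes the key observation: given $q\in P(F)$ and $k\in\N_0$ with $k\le r$, the seminorm $p:=q\circ\alpha$ lies in $P(E)$, and for every $\gamma\in C^r([0,1],E)$,
\[
\|\alpha_*(\gamma)\|_{C^k,q}=\max_{j=0,\ldots,k}\ \max_{t\in[0,1]}q\bigl(\alpha(\gamma^{(j)}(t))\bigr)=\|\gamma\|_{C^k,p}\,.
\]
As the seminorms $\|.\|_{C^k,q}$ (for $q\in P(F)$ and $k\le r$) generate the topology of $C^r([0,1],F)$, this shows that $\alpha_*$ is continuous.

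Finally, assume that $\alpha$ is a topological embedding. Then $\alpha$ is injective, hence so is $\alpha_*$, as $\alpha\circ\gamma=\alpha\circ\delta$ forces $\gamma=\delta$ pointwise. Moreover, the topology of $E$ is the initial topology with respect to $\alpha$, so each $p\in P(E)$ satisfies $p\le q\circ\alpha$ for a suitable $q\in P(F)$ (take the maximum of finitely many continuous seminorms on $F$ cutting out a basic $0$-neighbourhood whose $\alpha$-preimage is contained in $\{x\in E\colon p(x)\le 1\}$, and use homogeneity of seminorms). Then, for $k\le r$ and $\gamma\in C^r([0,1],E)$,
\[
\|\gamma\|_{C^k,p}=\max_{j\le k}\ \max_{t}\,p(\gamma^{(j)}(t))\ \le\ \max_{j\le k}\ \max_{t}\,q\bigl(\alpha(\gamma^{(j)}(t))\bigr)=\|\alpha_*(\gamma)\|_{C^k,q}\,,
\]
so every basic seminorm on $C^r([0,1],E)$ is dominated by the pullback through $\alpha_*$ of a seminorm on $C^r([0,1],F)$. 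Hence the topology of $C^r([0,1],E)$ coincides with the initial topology induced by $\alpha_*$; together with the continuity already established, this means $\alpha_*$ is a topological embedding. The only step here that is not routine bookkeeping with the defining seminorms is the domination $p\le q\circ\alpha$, but this is merely the embedding hypothesis on $\alpha$ rephrased, so I expect no genuine obstacle.
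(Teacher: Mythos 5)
Your proof is correct and follows essentially the same route as the paper's: the identity $(\alpha\circ\gamma)^{(j)}=\alpha\circ\gamma^{(j)}$ combined with the seminorm computation $\|\alpha_*(\gamma)\|_{C^k,q}=\|\gamma\|_{C^k,q\circ\alpha}$ for continuity, and the domination $p\le q\circ\alpha$ afforded by the embedding hypothesis for the converse estimate. The extra details you supply (well-definedness of $\alpha_*$ and injectivity) are fine but not where the substance lies.
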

\begin{la}\label{cpro}
If $E$ is a locally convex space and $r\in \N_0\cup\{\infty\}$,
then the\linebreak
topology on $C^r([0,1],E)$ is initial with respect to the mappings
$(\pi_p)_*:$\linebreak
$C^r([0,1],E)\to C^r([0,1],E_p)$, $\gamma\mto \pi_p\circ \gamma$
for $p\in P(E)$.
\end{la}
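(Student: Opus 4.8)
The plan is to verify that the two locally convex topologies on $C^r([0,1],E)$ agree: the \emph{given} topology, defined by the seminorms $\|.\|_{C^k,p}$ for $p\in P(E)$ and $k\in\N_0$ with $k\le r$; and the \emph{initial} topology with respect to the family $\bigl((\pi_p)_*\bigr)_{p\in P(E)}$. I would establish equality by showing that each of the two is coarser than the other.

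First I would dispose of the easy inclusion. Since $\pi_p\colon E\to E_p$ is continuous and linear, Lemma~\ref{embfuncs} shows that $(\pi_p)_*=C^r([0,1],\pi_p)$ is continuous (and linear) for the given topology, for every $p\in P(E)$. Thus the given topology is one of the locally convex topologies rendering all the maps $(\pi_p)_*$ continuous; as the initial topology is by definition the coarsest such topology, it is coarser than the given one.

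For the reverse inclusion I would first record that, $\pi_p$ being continuous and linear, it passes through the derivatives of curves: by the Chain Rule recalled above, $\pi_p\circ\gamma\in C^r([0,1],E_p)$ and $(\pi_p\circ\gamma)^{(j)}=\pi_p\circ\gamma^{(j)}$ for all $j\in\N_0$ with $j\le r$ and all $\gamma\in C^r([0,1],E)$. Combining this with the identity $\|\pi_p(x)\|_p=p(x)$ yields, for every $k\in\N_0$ with $k\le r$,
\[
\bigl\|(\pi_p)_*(\gamma)\bigr\|_{C^k,\,\|\cdot\|_p}
=\max_{j=0,\ldots,k}\;\max_{t\in[0,1]}\bigl\|\pi_p\bigl(\gamma^{(j)}(t)\bigr)\bigr\|_p
=\max_{j=0,\ldots,k}\;\max_{t\in[0,1]}p\bigl(\gamma^{(j)}(t)\bigr)
=\|\gamma\|_{C^k,p}.
\]
Hence each defining seminorm $\|.\|_{C^k,p}$ of the given topology factors as $\|\cdot\|_{C^k,\,\|\cdot\|_p}\circ(\pi_p)_*$, the composition of the continuous map $(\pi_p)_*$ with a continuous seminorm on $C^r([0,1],E_p)$, and is therefore continuous for the initial topology. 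Since these seminorms generate the given topology, the given topology is coarser than the initial one, and equality follows.

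I do not anticipate a genuine obstacle, as the content is entirely formal; the only two points deserving a sentence of care are that $\pi_p$ commutes with the operators $\gamma\mapsto\gamma^{(j)}$ (which is why the displayed chain of equalities is exact rather than a mere estimate) and that $\|\cdot\|_p$ is literally the norm on $E_p$ induced by $p$, so that $\|\cdot\|_{C^k,\,\|\cdot\|_p}$ is indeed one of the seminorms defining the topology of $C^r([0,1],E_p)$.
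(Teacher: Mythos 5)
Your proposal is correct and follows essentially the same route as the paper: both hinge on the exact identity $\|(\pi_p)_*(\gamma)\|_{C^k,\|\cdot\|_p}=\|\gamma\|_{C^k,p}$, obtained from $(\pi_p\circ\gamma)^{(j)}=\pi_p\circ\gamma^{(j)}$ and $p=\|\cdot\|_p\circ\pi_p$. You merely spell out the easy direction (continuity of each $(\pi_p)_*$, via Lemma~\ref{embfuncs}) that the paper subsumes in ``the assertion follows.''
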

\begin{la}\label{funmac}
\hspace*{2mm}If $r\in \N_0\cup\{\infty\}$ and $E$ is a locally convex space $E$ which is
complete $($resp., Mackey-complete$)$,
then also
$C^r([0,1],E)$ is complete $($resp., Mackey-complete$)$.
\end{la}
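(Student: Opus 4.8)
The plan is to show directly that every Cauchy net in $C^r([0,1],E)$ converges when $E$ is complete, and that every Mackey-Cauchy sequence in $C^r([0,1],E)$ converges when $E$ is Mackey-complete; recall (see \cite{KaM}) that Mackey-completeness of a locally convex space $F$ is equivalent to the convergence of all its Mackey-Cauchy sequences, and also to the requirement that, for each bounded, closed, absolutely convex $D\sub F$, the linear span $F_D$ of $D$, normed by the Minkowski functional of $D$, be a Banach space. We treat both cases in parallel: let $(\gamma_i)_{i\in I}$ be a Cauchy net (resp., a Mackey-Cauchy sequence). Since the linear maps $C^r([0,1],E)\to C([0,1],E)$, $\gamma\mto\gamma^{(k)}$, are continuous for $k\in\N_0$ with $k\le r$, the families $(\gamma_i^{(k)})_i$ inherit the corresponding property. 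The argument then has two steps: (1)~each $(\gamma_i^{(k)})_i$ converges \emph{uniformly} on $[0,1]$ to a continuous map $\eta_k\colon[0,1]\to E$; (2)~the mean value inequality forces $\eta_k'=\eta_{k+1}$ whenever $k<r$, so that $\eta_0\in C^r([0,1],E)$ with $\eta_0^{(k)}=\eta_k$ and $\gamma_i\to\eta_0$ in $C^r([0,1],E)$.

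Step (1). When $E$ is complete, $C([0,1],E)$ is complete by the usual reasoning: for each $t$ the net $(\gamma_i^{(k)}(t))_i$ is Cauchy in $E$, with some limit $\eta_k(t)\in E$; the convergence $\gamma_i^{(k)}\to\eta_k$ is uniform (let $j$ run in $p(\gamma_i^{(k)}(t)-\gamma_j^{(k)}(t))\le\|\gamma_i^{(k)}-\gamma_j^{(k)}\|_{C^0,p}$); and a uniform limit of continuous maps is continuous. When $E$ is merely Mackey-complete, write $\gamma_i-\gamma_j\in\mu_{ij}B$ with $B\sub C^r([0,1],E)$ bounded, closed, absolutely convex and $\mu_{ij}\to0$. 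For each $k\le r$ the set $B_k:=\{\beta^{(k)}(t):\beta\in B,\,t\in[0,1]\}$ is bounded in $E$, hence so is its closed absolutely convex hull $D_k$, and $E_{D_k}$ is a Banach space. Since $\gamma_i^{(k)}(t)-\gamma_j^{(k)}(t)\in\mu_{ij}B_k\sub\mu_{ij}D_k$, one has $\|\gamma_i^{(k)}(t)-\gamma_j^{(k)}(t)\|_{D_k}\le\mu_{ij}$ for every $t$; fixing an index $i_0$, the sequence $(\gamma_i^{(k)}(t))_i$ is Cauchy with respect to $\|\cdot\|_{D_k}$ in the coset $\gamma_{i_0}^{(k)}(t)+E_{D_k}$, which is complete, hence has a limit $\eta_k(t)\in E$ with $\|\gamma_i^{(k)}(t)-\eta_k(t)\|_{D_k}\le\nu_i:=\sup_{m\ge i}\mu_{im}$ for all $t$. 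As $\nu_i\to0$ and $D_k$ is bounded, $p(\gamma_i^{(k)}(t)-\eta_k(t))\le\nu_i\sup_{x\in D_k}p(x)\to0$ uniformly in $t$, so again $\gamma_i^{(k)}\to\eta_k$ uniformly, and $\eta_k$ is continuous.

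Step (2). Fix $i$ and $k$ with $k<r$. As $\gamma_i^{(k)}$ is a $C^1$-curve with $(\gamma_i^{(k)})'=\gamma_i^{(k+1)}$, the mean value inequality (a standard consequence of the Hahn-Banach theorem, requiring no completeness of $E$) gives, for every continuous seminorm $p$, every $t\in[0,1]$ and every $h\ne0$ with $t+h\in[0,1]$,
\[
p\Bigl(\tfrac{1}{h}\bigl(\gamma_i^{(k)}(t+h)-\gamma_i^{(k)}(t)\bigr)-\gamma_i^{(k+1)}(t)\Bigr)\;\le\;\sup\nolimits_s\,p\bigl(\gamma_i^{(k+1)}(s)-\gamma_i^{(k+1)}(t)\bigr),
\]
the supremum over $s$ between $t$ and $t+h$. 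Letting $i$ run and invoking the uniform convergences $\gamma_i^{(k)}\to\eta_k$ and $\gamma_i^{(k+1)}\to\eta_{k+1}$, the same estimate holds with $\eta_k,\eta_{k+1}$ in place of $\gamma_i^{(k)},\gamma_i^{(k+1)}$; since $\eta_{k+1}$ is continuous, the right-hand side tends to $0$ as $h\to0$, so $\eta_k'(t)=\eta_{k+1}(t)$. By induction on $k$ it follows that $\eta_0\in C^r([0,1],E)$ with $\eta_0^{(k)}=\eta_k$, and then $\|\gamma_i-\eta_0\|_{C^k,p}=\max_{j\le k}\|\gamma_i^{(j)}-\eta_j\|_{C^0,p}\to0$, i.e. $\gamma_i\to\eta_0$ in $C^r([0,1],E)$, as required.

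I do not expect a deep obstacle, since the statement is folklore; the point demanding care is Step (1) in the Mackey-complete case. There one cannot fall back on the fundamental theorem of calculus---weak integrals of merely continuous curves need not exist in a Mackey-complete space---which is exactly why the argument is organized around the auxiliary Banach spaces $E_{D_k}$ and, in Step (2), around the mean value inequality rather than around the identities $\gamma_i^{(k)}(t)=\gamma_i^{(k)}(0)+\int_0^t\gamma_i^{(k+1)}(s)\,ds$.
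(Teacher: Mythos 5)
Your argument is correct, but it follows a genuinely different route from the paper's. The paper first reduces $r=\infty$ to finite $r$ via the projective limit $C^\infty([0,1],E)=\pl\, C^k([0,1],E)$, settles the case $k=0$ by embedding $C([0,1],E)$ into the complete space $C([0,1],\wt{E})$ for a completion $\wt{E}\supseteq E$ and checking via point evaluations that the limit of a Mackey--Cauchy sequence takes values in $E$, and then performs the inductive step by identifying $C^k([0,1],E)$ with a \emph{closed} subspace of $C([0,1],E)\times C^{k-1}([0,1],E)$ through the map $\gamma\mto(\gamma,\gamma')$, whose image is cut out by the Fundamental Theorem of Calculus. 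You instead give a single direct argument: uniform convergence of each derivative family, obtained in the Mackey-complete case from the characterization of Mackey-completeness via the local Banach spaces $E_D$ (which is indeed among the equivalences in \cite{KaM}, though the paper only recalls the Mackey--Cauchy-sequence form), followed by the classical ``uniform limit of derivatives'' step via the mean value inequality. What your approach buys is self-containedness: you never pass to a completion of $E$, and you avoid weak integrals entirely -- a point worth making, since for merely Mackey-complete $E$ the integral $\int_0^t\eta(s)\,ds$ of a continuous curve need not exist in $E$ and the paper's closedness argument for $\im(h)$ has to be read in the completion. What the paper's approach buys is brevity, since the embedding and projective-limit machinery is already set up in its Lemmas~\ref{embfuncs} and \ref{cpro} and Remark~\ref{redorder}. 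Two small points in your write-up deserve a word of care: you should note that the bounded set $B$ witnessing the Mackey--Cauchy property may be replaced by its closed absolutely convex hull, and that $\nu_i:=\sup_{m\geq i}\mu_{im}$ need only be small for large $i$ (its finiteness for small $i$ is irrelevant). Neither affects the validity of the proof.
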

\section{Picard iteration of paths in a topological algebra}
\begin{numba}\label{setting}
Let $A$ be a locally convex topological algebra over $\C$,
i.e., a unital, associative, complex algebra,
equipped with a Hausdorff locally convex vector topology making
the map $A\times A\to A$, $(x,y)\mto xy$ continuous.
We assume that condition ($*$) from Theorem~A is satisfied.\footnote{Note that $A$
is not assumed to be a continuous inverse algebra in this section.}
\end{numba}
If $E$ is a locally convex space, then a function $\gamma\colon [0,1]\to E$
is a \emph{Lipschitz curve} if $\{\frac{\gamma(t)-\gamma(s)}{t-s}\colon s\not=t\in[0,1]\}$
is bounded in~$E$ (cf.\ \cite[p.\,9]{KaM}). For our current purposes, we endow the space
$\Lip([0,1],E)$
of all such curves with the topology $\cO_{C^0}$ induced by $C^0([0,1],E)$.
\begin{la}[Picard Iteration]\label{noinverse}
Let $A$ be as in {\rm\ref{setting}}.
If $A$ is sequentially complete
and $\gamma\in C([0,1],A)$,
we can define a sequence $(\eta_n)_{n\in \N}$
in $C^1([0,1],A)$ via $\eta_0(t):=1$,
\[
\eta_n(t)=1+\int_0^t\eta_{n-1}(t_n)\gamma(t_n)\,dt_n\quad\mbox{for $\,t\in [0,1]$ and $\,n\in \N$.}
\]
Then the following holds:
\begin{itemize}
\item[\rm(a)] The limit
$\eta:=\eta_\gamma:={\displaystyle \lim_{n\to\infty}}\eta_n$ exists in $C^1([0,1],A)$.
\item[\rm(b)]
$\eta_n(t)=1+\sum_{k=1}^n\int_0^t\int_0^{t_{k-1}}\cdots\int_0^{t_2}\gamma(t_1)\cdots\gamma(t_k)\,dt_1\ldots dt_k$
for each $n\in \N_0$ and $t\in [0,1]$, and thus
\begin{equation}\label{sumform}
\eta(t)=1+\sum_{n=1}^\infty\int_0^t\int_0^{t_{n-1}}\!\!\!\!\!\cdots\int_0^{t_2}\gamma(t_1)\cdots\gamma(t_n)\,dt_1\ldots dt_n.
\end{equation}
\item[\rm(c)]
$\eta'(t)=\eta(t)\gamma(t)$ and $\eta(0)=1$.
\item[\rm(d)]
The map $\Phi\colon C([0,1],A)\to C^1([0,1],A)$, $\gamma\mto\eta_\gamma$
is complex analytic.
\end{itemize}
If $A$ is not sequentially complete, but
Mackey-complete, then the $(\eta_n)_{n\in \N_0}$
can be defined and {\rm(a)--(c)} hold
for each $\gamma\in \Lip([0,1],A)$. Moreover,
\begin{itemize}
\item[\rm(d)$'$]
$\Phi\colon (\Lip([0,1],A), \cO_{C^0}) \to C^1([0,1],A)$, $\gamma\mto\eta_\gamma$
is complex analytic.
\end{itemize}
\end{la}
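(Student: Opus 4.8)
The plan is to exhibit the candidate evolution as the value at $\gamma$ of a series of continuous homogeneous polynomials and to read off all of (a)--(d) from this, the analyticity in (d)/(d$'$) being delivered by Lemma~\ref{special}. For $k\in\N_0$ put $M_0:=1$ (the constant curve) and, recursively,
\[
M_k(\gamma_1,\ldots,\gamma_k)(t):=\int_0^t M_{k-1}(\gamma_1,\ldots,\gamma_{k-1})(s)\,\gamma_k(s)\,ds
 =\int_{0\le t_1\le\cdots\le t_k\le t}\!\!\gamma_1(t_1)\cdots\gamma_k(t_k)\,dt_1\cdots dt_k .
\]
In the sequentially complete case these weak integrals exist for continuous $\gamma_j$; in the Mackey-complete case one first notes that the product of a $C^1$-curve with a Lipschitz curve is again a Lipschitz curve (continuity of multiplication gives $p(xy)\le q(x)q(y)$ for a suitable $q$), so that the iterated integrals of Lipschitz curves still exist and yield $C^1$-curves. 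The single estimate that is needed comes from the triangle inequality for weak integrals, from (\ref{linweakint}) applied to right multiplication (which gives $\tfrac{d}{dt}M_k(\gamma_1,\ldots,\gamma_k)(t)=M_{k-1}(\gamma_1,\ldots,\gamma_{k-1})(t)\,\gamma_k(t)$), from the $k$-linearity of $\mu_k$, and from the simplex volume $t^k/k!$: given $p\in P(A)$, choose $q\in P(A)$ and $M\ge 0$ with $\|\mu_k\|_{p,q}\le M^k$ for all $k$ (condition~($*$)); then for $k\ge 1$
\[
\|M_k(\gamma_1,\ldots,\gamma_k)\|_{C^1,p}\;\le\;\frac{\|\mu_k\|_{p,q}}{(k-1)!}\,\prod_{j=1}^k\|\gamma_j\|_{C^0,q}\;\le\;\frac{M^k}{(k-1)!}\,\prod_{j=1}^k\|\gamma_j\|_{C^0,q}.
\]
Hence each $M_k$ is a continuous $k$-linear map into $C^1([0,1],A)$, so $\Phi_k(\gamma):=M_k(\gamma,\ldots,\gamma)$ is a continuous homogeneous polynomial of degree $k$, and $\sum_{k\ge 0}\|\Phi_k(\gamma)\|_{C^1,p}\le p(1)+M\|\gamma\|_{C^0,q}\,e^{M\|\gamma\|_{C^0,q}}<\infty$ for every $p$.

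Write $E:=C([0,1],A)$ (resp.\ $(\Lip([0,1],A),\cO_{C^0})$), $F:=C^1([0,1],A)$ with a completion $\widetilde F\supseteq F$, and $\Phi:=\sum_{k\ge0}\Phi_k$. The last bound makes the partial sums $\eta_{\gamma,n}:=\sum_{k=0}^n\Phi_k(\gamma)$ Cauchy in $F$, hence convergent in $\widetilde F$; and expanding $\Phi_k(\gamma_0+\delta)$ multilinearly gives $\sum_{k>n}\|\Phi_k(\gamma_0+\delta)\|_{C^1,p}\le\sum_{k>n}\tfrac{M^k}{(k-1)!}(\|\gamma_0\|_{C^0,q}+\|\delta\|_{C^0,q})^k$, which tends to $0$ uniformly for $\delta$ in the $0$-neighbourhood $\{\|\delta\|_{C^0,q}\le 1\}$. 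Thus $\sum_k\Phi_k$ converges uniformly on a neighbourhood of every point, so (by an $\varepsilon/3$ argument) $\Phi\colon E\to\widetilde F$ is continuous. Since $A$ is Mackey-complete (sequential completeness implies this) and therefore so is $F=C^1([0,1],A)$ by Lemma~\ref{funmac}, Lemma~\ref{special}---applied with this $E$, $F$, $\widetilde F$, with $U:=E$ and, in the notation of that lemma, $p_n:=\Phi_n$---shows at once that $\Phi(\gamma)\in C^1([0,1],A)$ for all $\gamma$ and that $\Phi$ is $\C$-analytic; this is (d), resp.\ (d$'$).

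It remains to identify $\Phi(\gamma)$ with $\eta_\gamma$. The recursion for $\eta_n$ is well posed in both settings (inductively $\eta_{n-1}\in C^1$, so $\eta_{n-1}\gamma$ is a continuous, resp.\ Lipschitz, curve whose integral exists), and an induction on $n$ using linearity of the weak integral over finite sums and the identity $\int_0^t M_k(\gamma,\ldots,\gamma)(s)\gamma(s)\,ds=M_{k+1}(\gamma,\ldots,\gamma)(t)$ gives $\eta_n=\sum_{k=0}^n\Phi_k(\gamma)=\eta_{\gamma,n}$, i.e.\ formula~(b); letting $n\to\infty$ yields $\eta_\gamma=\Phi(\gamma)\in C^1([0,1],A)$, the series formula (\ref{sumform}), and (a). For (c): $\eta_n\to\eta_\gamma$ in $C^1$ forces $\eta_n'\to\eta_\gamma'$ uniformly, while $\eta_n'(t)=\sum_{k=1}^n M_{k-1}(\gamma,\ldots,\gamma)(t)\gamma(t)=\eta_{n-1}(t)\gamma(t)$ and $\eta_{n-1}(t)\gamma(t)\to\eta_\gamma(t)\gamma(t)$ uniformly (continuity of multiplication), so $\eta_\gamma'=\eta_\gamma\gamma$; and $\eta_\gamma(0)=\lim\eta_n(0)=1$. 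The one genuinely delicate point is the continuity of $\Phi$ as a map into $\widetilde F$, which is exactly what lets us invoke Lemma~\ref{special} and, in particular, pull the $\widetilde F$-limit back into $C^1([0,1],A)$ in the merely Mackey-complete case; everything else is bookkeeping with the simplex integrals and the single estimate above.
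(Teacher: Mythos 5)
Your proof is correct and follows essentially the same route as the paper's own argument: both express the Picard iterates as partial sums of a series of continuous homogeneous polynomials (your $M_k$ are the paper's $\tau_k$), derive the $\frac{M^k}{(k-1)!}$ estimate from condition ($*$), obtain convergence and continuity in a completion of $C^1([0,1],A)$, and invoke Lemma~\ref{special} together with Lemma~\ref{funmac} to pull the limit back into $C^1([0,1],A)$ and obtain analyticity, with (c) following by passing to the limit in the recursion. The only (harmless) differences are that you spell out why the iterated integrals exist in the Mackey-complete case via products of Lipschitz curves, and you verify continuity of $\Phi$ by a direct uniform-convergence argument rather than reducing to the factor spaces $(\wt{A})_p$ via Lemma~\ref{cpro}.
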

\begin{proof}
If $A$ is sequentially complete, set
$X:=C([0,1],A)$; otherwise,
set $X:=\Lip([0,1],A)$.
Let $\wt{A}$ be a completion of $A$ such that $A\sub\wt{A}$.
Then the inclusion map
$\phi\colon C^1([0,1],A)\to C^1([0,1],\wt{A})$
is a topological embedding (Lemma~\ref{embfuncs})
and $C^1([0,1],\wt{A})$ is complete (Lemma~\ref{funmac}).
Hence also the closure $Y\sub C^1([0,1],\wt{A})$ of the image $\im(\phi)$
is complete, and thus $\phi\colon C^1([0,1],A)\to Y$
is a completion of $C^1([0,1],A)$.

Proof of (a) and (b), (d) and (d)$'$:
Let $\gamma\in X$.
Then all integrals needed to define
$\sigma_n(\gamma):=\eta_n$ exist, and each $\eta_n$ is $C^1$,
by the Fundamental Theorem of Calculus.
A trivial induction shows that
\begin{equation}\label{altforme}
\eta_n(t)=1+\sum_{k=1}^n\int_0^t\int_0^{t_{k-1}}\!\!\!\cdots\int_0^{t_2}\gamma(t_1)\cdots\gamma(t_k)\,dt_1\ldots dt_k
\end{equation}
(as asserted in (b)).
Likewise, if $n\in  \N$ and $\gamma_1,\ldots,\gamma_n \in X$,
then the weak integrals needed to define $\tau_n(\gamma_1,\ldots,\gamma_n)\colon [0,1] \to  A$,
\[
t\mto
\int_0^t\int_0^{t_n}\cdots\int_0^{t_2} \gamma_1(t_1)\cdots \gamma_n(t_n)\,dt_1\cdots dt_n
\]
exist and $\tau_n(\gamma_1,\ldots,\gamma_n)$
is a $C^1$-map.
Since $\tau_n\colon C([0,1],A)^n\to C^1([0,1],A)$,
$(\gamma_1,\ldots,\gamma_n)\mto \tau_n(\gamma_1,\ldots,\gamma_n)$
is $n$-linear and $\sigma_n(\gamma)=\tau_n(\gamma,\ldots,\gamma)$,
the map $\sigma_n\colon C([0,1],A)\to C^1([0,1],A)$
is a homogeneous polynomial of degree~$n$
(and this conclusion also holds if $n=0$,
as $\sigma_0$ is constant).
If $p\in P(A)$, there is $q\in P(A)$ and $M\in [0,\infty[$ such that
\[
(\forall n\in \N)\quad \|\mu_n\|_{p,q}\leq M^n\,,
\]
as a consequence of condition ($*$).
Applying $p$ to the iterated integral in (\ref{altforme}), we deduce
that
$p(\sigma_n(\gamma)(t))\leq
\frac{t^n}{n!} \|\mu_n\|_{p,q}
\|\gamma\|_{C_0,q}^n\leq
\frac{t^nM^n}{n!}
\|\gamma\|_{C_0,q}^n$ for each $t\in [0,1]$
and thus
\begin{equation}\label{niceest1}
\|\sigma_n(\gamma)\|_{C^0,p}\leq
\frac{M^n}{n!}
\|\gamma\|_{C^0,q}^n.
\end{equation}
Also, $\sigma_1(\gamma)'(t)=\gamma(t)$
and
\begin{equation}\label{pre}
\sigma_n(\gamma)'(t)=
\int_0^t\int_0^{t_{n-1}}\!\!\!\cdots\int_0^{t_2} \gamma(t_1)\cdots\gamma(t_{n-1})\gamma(t)\,dt_1\cdots dt_{n-1}
\end{equation}
if $n\geq 2$, by the Fundamental Theorem of Calculus.
Thus
\begin{equation}\label{prediffa}
(\forall n\in \N)\quad \eta_n'(t)=\eta_{n-1}(t)\gamma(t)\,,
\end{equation}
which will be useful later.
By~(\ref{pre}), also
$p(\sigma_n(\gamma)'(t))\leq\frac{t^{n-1}}{(n-1)!}
\|\mu_n\|_{p,q}
\|\gamma\|_{C^0,q}^n$
and thus
\begin{equation}\label{niceest2}
\|\sigma_n(\gamma)'\|_{C^0,p}\leq\frac{M^n}{(n-1)!}
\|\gamma\|_{C^0,q}^n.
\end{equation}
Combining (\ref{niceest1}) and (\ref{niceest2}), we see that
\begin{equation}\label{niceest3}
\|\sigma_n(\gamma)\|_{C^1,p}\leq
\frac{M^n}{(n-1)!} \|\gamma\|_{C^0,q}^n\,.
\end{equation}
Therefore $\sigma_n\colon C([0,1],A)\to C^1([0,1],A)$
is a continuous homogeneous\linebreak
polynomial.
Moreover, we obtain
\[
\sum_{n=1}^\infty\|\sigma_n(\gamma)\|_{C^1,p}
\leq\sum_{n=1}^\infty
\frac{M^n\|\gamma\|_{C^0,q}^n}{(n-1)!}=M\|\gamma\|_{C^0,q}e^{M\|\gamma\|_{C^0,q}}<\infty\,.
\]
This estimate entails that the series $\sum_{n=0}^\infty\sigma_n(\gamma)$
converges absolutely in the\linebreak
completion $Y$ of $C^1([0,1],A)$.
In particular, the limit
\[
\Phi(\gamma):=\sum_{n=0}^\infty\sigma_n(\gamma)
\]
exists in~$Y$, and defines a function
$\Phi\colon X\to Y$.
We claim that~$\Phi$ is continuous.
If this is true, then we
can exploit that $C^1([0,1],A)$ is Mackey-complete by Lemma~\ref{funmac},
and each $\sigma_n$ takes its values inside $C^1([0,1],A)$.
Thus all hypotheses of Lemma~\ref{special} are satisfied,
and we deduce that
$\Phi(\gamma)\in C^1([0,1],A)$ for each $\gamma$
(entailing (a) and (b)),
and that the map $\Phi\colon X\to C^1([0,1],A)$ is complex analytic
(establishing (d) and (d)$'$, respectively).\\[2.7mm]
To establish the claim, we need only show that
$\Phi$ is continuous
as a map to $C^1([0,1],\wt{A})$. Identify $p\in P(A)$ with its continuous
extension to a seminorm on~$\wt{A}$. Let $\pi_p\colon \wt{A}\to((\wt{A})_p,\|.\|_p)$
be the canonical map. By Lemma~\ref{cpro}, $\Phi$ will be continuous if the maps
$h:=(\pi_p)_*\circ \Phi\colon X\to C^1([0,1],(\wt{A})_p)$ are continuous.
It suffices that~$h$ is continuous
on the ball $B_R:=\{\gamma\in X\colon \|\gamma\|_{C^0,q}<R\}$
for each $R>0$.
However,
\[
h(\gamma)=\sum_{n=0}^\infty \pi_p\circ \sigma_n(\gamma)
\]
for $\gamma\in B_R$, where
\[
\|\pi_p\circ \sigma_n(\gamma)\|_{C^1,\|.\|_p}
=\|\sigma_n(\gamma)\|_{C^1,p}\leq
\frac{M^n}{(n-1)!} \|\gamma\|_{C^0,q}^n
\leq
\frac{M^n}{(n-1)!} R^n
\]
for $n\in\N$,
by (\ref{niceest3}).
Hence
\[
\sum_{n=0}^\infty \sup\{\pi_p\circ \sigma_n(\gamma)\colon \gamma\in B_R\}
\leq 1+ MRe^{RM}<\infty\,,
\]
entailing that $\sum_{k=0}^n((\pi_p)_*\circ\sigma_n|_{B_R})\to h|_R$
uniformly. Thus $h|_{B_R}$ is continuous, being a uniform
limit of continuous functions.

(c) Because $\eta_n\to \eta$ in $C^1([0,1],A)$, we know that $\eta_n'\to \eta'$ uniformly
(and thus pointwise).
Letting $n\to\infty$ in (\ref{prediffa}), we deduce that $\eta'(t)=\eta(t)\gamma(t)$.
\end{proof}
\section{Proof of the theorem}
We establish our theorem
as a special case of a more general result\linebreak
(Proposition~\ref{stronger}).
The latter deals with certain strengthened
regularity\linebreak
properties (as used earlier in~\cite{SUR}
and \cite{Da2}):
\begin{defn}
Let $G$ be a Lie group modelled on a locally convex space,
with Lie algebra $\cg$, and $k\in \N_0\cup\{\infty\}$.
\begin{itemize}
\item[(a)]
$G$ is called \emph{strongly $C^k$-regular}
if every $\gamma\in C^k([0,1],\cg)$ admits an evolution $\Evol(\gamma)\in C^1([0,1],G)$
and the map $\evol\colon C^k([0,1],\cg)\to G$, $\gamma\mto\Evol(\gamma)(1)$ is smooth.
\item[(b)]
$G$ is called \emph{$C^k$-regular}
if each $\gamma\in C^\infty([0,1],\cg)$ has an evolution
and the map $\evol\colon (C^\infty([0,1],\cg),\cO_{C^k})\to G$, $\gamma\mto\Evol(\gamma)(1)$ is smooth,
where $\cO_{C^k}$ denotes the topology induced by $C^k([0,1],\cg)$ on $C^\infty([0,1],\cg)$.
\end{itemize}
\end{defn}
The reader is referred to \cite{GDL}
for a detailled discussion of these
regularity properties (and applications depending thereon).
Both $C^\infty$-regularity and strong $C^\infty$-regularity
coincide with regularity in the usual sense.
If $k\leq \ell$ and $G$ is (strongly) $C^k$-regular,
then $G$ is also (strongly) $C^\ell$-regular.
\begin{rem}\label{ofcourse}
If $A$ is a continuous inverse algebra, we identify the tangent bundle $T(A^\times)$ of
the open set~$A^\times$ with $A^\times \times A$
in the natural way. Let $\eta\colon [0,1]\to A^\times$ be a $C^1$-curve
and $\gamma\colon [0,1]\to A$ be continuous.
Then $\eta'(t)=\eta(t)\gamma(t)$ holds in $T(A^\times)$
(using $\eta'\colon [0,1]\to T(A^\times)$, and identifying the range $A$ of $\gamma$
with $\{1\}\times A\sub T_1(A^\times)$)
if and only if $\eta'(t)=\eta(t)\gamma(t)$ holds in $A$ (where the product
simply refers to the algebra multiplication, and $\eta'\colon [0,1]\to A$
is the derivative of the $A$-valued $C^1$-curve $\eta$).
\end{rem}
The next lemma will help us to see that the
$A$-valued map $\eta$ associated to~$\gamma$ in Lemma~\ref{noinverse} actually
takes its values in~$A^\times$, if $A$ is a continuous inverse algebra.
Hence $\eta$ will be the evolution of $\gamma$, by Remark~\ref{ofcourse}.
\begin{la}\label{ridnoninv}
Let $A$ be a continuous inverse algebra, $\gamma\colon [0,1]\to A$ be continuous
and $\eta\colon [0,1]\to A$ as well as $\zeta\colon[0,1]\to A$
be $C^1$-curves. Assume that  $\eta(0)=\zeta(0)=1$ and
\begin{equation}\label{multhy}
\eta'(t)=\eta(t)\gamma(t)\quad \mbox{and}\quad
\zeta'(t)=\zeta(t)\gamma(t)\quad\mbox{for all $t\in [0,1]$.}
\end{equation}
If $\zeta([0,1])\sub A^\times$, then $\eta=\zeta$.
\end{la}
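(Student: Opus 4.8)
The plan is to reduce to showing that the ``quotient'' curve $\beta\colon[0,1]\to A$, $\beta(t):=\eta(t)\zeta(t)^{-1}$, is constant equal to~$1$; once this is known, $\eta(t)\zeta(t)^{-1}=1$ gives $\eta(t)=\eta(t)\zeta(t)^{-1}\zeta(t)=\zeta(t)$ for every~$t$. It matters here that $\eta$ is \emph{not} assumed to take values in $A^\times$: I multiply $\eta$ on the right by $\zeta^{-1}$, using only that $\zeta(t)$ (not $\eta(t)$) is invertible, and form $\eta\zeta^{-1}$ rather than $\zeta^{-1}\eta$.

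First I would check that $\beta$ is a well-defined $C^1$-curve and compute its derivative. Since $A$ is a continuous inverse algebra, $A^\times$ is open and the inversion map $\iota\colon A^\times\to A$ is $\K$-analytic, hence smooth; as $\zeta$ is $C^1$ with $\zeta([0,1])\sub A^\times$, the curve $t\mto\zeta(t)^{-1}=\iota(\zeta(t))$ is $C^1$, with $(\zeta^{-1})'(t)=d\iota(\zeta(t),\zeta'(t))$ by the Chain Rule. Differentiating the constant map $x\mto x\,\iota(x)=1$ by means of the Chain Rule and the Leibniz rule for the continuous bilinear multiplication of~$A$ yields $v\,\iota(x)+x\,d\iota(x,v)=0$ for $x\in A^\times$, $v\in A$, so $d\iota(x,v)=-x^{-1}vx^{-1}$ and hence $(\zeta^{-1})'(t)=-\zeta(t)^{-1}\zeta'(t)\zeta(t)^{-1}$. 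Then $\beta$ is $C^1$ as the pointwise product of the $C^1$-curves $\eta$ and $\zeta^{-1}$, and the Leibniz rule together with (\ref{multhy}) gives
\[
\beta'(t)=\eta'(t)\zeta(t)^{-1}+\eta(t)(\zeta^{-1})'(t)=\eta(t)\gamma(t)\zeta(t)^{-1}-\eta(t)\zeta(t)^{-1}\zeta(t)\gamma(t)\zeta(t)^{-1}=0,
\]
the two terms cancelling because $\zeta(t)^{-1}\zeta(t)=1$.

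Finally, a $C^1$-curve in a Hausdorff locally convex space with identically vanishing derivative on $[0,1]$ is constant (compose with each continuous linear functional $\lambda$, observe $(\lambda\circ\beta)'=\lambda\circ\beta'\equiv 0$, so $\lambda\circ\beta$ is constant, and use that the $\lambda$ separate the points of~$A$). Hence $\beta(t)=\beta(0)=\eta(0)\zeta(0)^{-1}=1$ for all~$t$, and $\eta=\zeta$ follows as above. I do not expect any serious obstacle; the only points that need (routine) care are getting the derivative formula for $\iota$ right and noting that the computation in fact forces $\eta(t)=\zeta(t)\in A^\times$, even though no invertibility of $\eta$ was assumed at the outset.
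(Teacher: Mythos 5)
Your proposal is correct and follows essentially the same route as the paper: the paper also forms the curve $\theta(t):=\eta(t)\zeta(t)^{-1}$, uses $d\iota(a,b)=-a^{-1}ba^{-1}$ and the product rule to show $\theta'\equiv 0$, and concludes $\theta\equiv\theta(0)=1$. Your additional derivations (of the differential of $\iota$ from $x\,\iota(x)=1$, and of ``vanishing derivative implies constant'' via continuous linear functionals) are details the paper handles by citation or leaves implicit.
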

\begin{proof}
Recall from \cite[proof of Lemma~3.1]{ALG}
that the differential of the inversion map $\iota\colon A^\times\to A$ is given by
$d\iota(a,b)=-a^{-1}ba^{-1}$ for $a\in A^\times$ and $b\in A$.
As a consequence, the derivative of the $C^1$-curve $\iota\circ \zeta\colon [0,1]\to A^\times$, $t\mto \zeta(t)^{-1}$
is given by
\begin{equation}\label{present}
(\iota\circ\zeta)'(t)=-\zeta(t)^{-1}\zeta'(t)\zeta(t)^{-1}.
\end{equation}
Now consider the $C^1$-curve $\theta\colon [0,1]\to A$, $\theta(t):=\eta(t)\zeta(t)^{-1}$.
Using the Product Rule, (\ref{present}) and (\ref{multhy}), we obtain
\begin{eqnarray*}
\theta'(t)&=&\eta'(t)\zeta(t)^{-1}-\eta(t)\zeta(t)^{-1}\zeta'(t)\zeta(t)^{-1}\\
&=&\eta(t)\gamma(t) \zeta(t)^{-1}-\eta(t)\zeta(t)^{-1}\zeta(t)\gamma(t)\zeta(t)^{-1}\\
&=&\eta(t)\gamma(t) \zeta(t)^{-1}-\eta(t)\gamma(t)\zeta(t)^{-1}\,=\, 0\,.
\end{eqnarray*}
Hence $\theta(t)=\theta(0)=\eta(0)\zeta(0)^{-1}=1$ for all $t\in [0,1]$
and thus $\eta=\zeta$.
\end{proof}
\begin{prop}\label{stronger}
Let $A$ be a continuous inverse
algebra over $\K\in \{\R,\C\}$,
which satisfies the condition {\rm($*$)}
described in the introduction.
\begin{itemize}
\item[\rm(a)]
If $A$ is sequentially complete,
then $A$ is strongly $C^0$-regular
and the map $\Evol\colon C^0([0,1],A)\to C^1([0,1],A^\times)$
is $\K$-analytic.
\item[\rm(b)]
If $A$ is Mackey-complete,
then $A$ is $C^0$-regular and strongly
$C^1$-regular.
Further, each $\gamma\!\in\! \Lip([0,1],A)$ has
an evolution $\Evol(\gamma)\!\in\! C^1([0,1],A^\times)$,
and $\Evol  \colon (\Lip([0,1],A),\cO_{C^0})\to C^1([0,1],A^\times)$
is $\K$-analytic.
\end{itemize}
\end{prop}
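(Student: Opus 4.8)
The plan is to derive both parts from the Picard iteration of Lemma~\ref{noinverse}; the only substantial new input is that the $A$-valued curve produced there in fact takes its values in $A^\times$, after which the regularity statements are bookkeeping with the $\K$-analyticity already supplied by Lemma~\ref{noinverse}. I would first reduce to $\K=\C$: the complexification $A_\C$ is a complex continuous inverse algebra with the same completeness properties, a routine estimate (extending each $p\in P(A)$ to a seminorm $\wb p$ on $A_\C$ and expanding products into monomials in real and imaginary parts gives $\|\mu_n^{A_\C}\|_{\wb p,\wb q}\leq 2^n\|\mu_n^{A}\|_{p,q}$) shows $A_\C$ again satisfies~($*$), the Picard iterates of a real curve lie in the closed real subspace $C^1([0,1],A)\subseteq C^1([0,1],A_\C)$ and hence so does their limit, and since $A\cap A_\C^\times=A^\times$ the complex case applied to $A_\C$ yields the $A^\times$-valued evolution for $A$; $\R$-analyticity of all the maps in question then follows by restricting the corresponding $\C$-analytic maps for $A_\C$, whose domains and target are the complexifications of the real ones. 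So assume $\K=\C$, and write $X:=C^0([0,1],A)$ in case~(a) and $X:=(\Lip([0,1],A),\cO_{C^0})$ in case~(b).

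Applying Lemma~\ref{noinverse} to $A$ gives, for each $\gamma\in X$, a curve $\eta_\gamma\in C^1([0,1],A)$ with $\eta_\gamma'=\eta_\gamma\gamma$ and $\eta_\gamma(0)=1$, and the map $\Phi\colon X\to C^1([0,1],A),\ \gamma\mapsto\eta_\gamma$ is $\C$-analytic. The opposite algebra $A^{\op}$ has the same underlying locally convex space, is sequentially/Mackey-complete together with $A$, and satisfies~($*$) because $\|\mu_n^{\op}\|_{p,q}=\|\mu_n\|_{p,q}$; applying Lemma~\ref{noinverse} to $A^{\op}$ and $-\gamma$ produces $\delta_\gamma\in C^1([0,1],A)$ with $\delta_\gamma'=-\gamma\delta_\gamma$ and $\delta_\gamma(0)=1$. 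The Product Rule gives $(\eta_\gamma\delta_\gamma)'=\eta_\gamma\gamma\delta_\gamma-\eta_\gamma\gamma\delta_\gamma=0$, so $\eta_\gamma(t)\delta_\gamma(t)=1$ for all $t$. Now consider $T:=\{t\in[0,1]\colon\delta_\gamma(t)\eta_\gamma(t)=1\}$: it is closed ($A$ being Hausdorff) and contains $0$, and it is open because for $t\in T$ the element $\eta_\gamma(t)$ is two-sidedly invertible with inverse $\delta_\gamma(t)$, so $\eta_\gamma(t)\in A^\times$, whence by openness of $A^\times$ and continuity of $\eta_\gamma$ one gets $\eta_\gamma(t')\in A^\times$ for $t'$ near $t$, and then $\eta_\gamma(t')\delta_\gamma(t')=1$ forces $\delta_\gamma(t')=\eta_\gamma(t')^{-1}$, hence $t'\in T$. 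By connectedness $T=[0,1]$, so $\eta_\gamma$ is $A^\times$-valued; by Remark~\ref{ofcourse} it is the evolution of $\gamma$ in the Lie group $A^\times$ (whose Lie algebra is $A$), and I set $\Evol(\gamma):=\eta_\gamma$.

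To assemble the statements, note that $C^1([0,1],A^\times)$ is open in $C^1([0,1],A)$ (for a curve with compact image in the open set $A^\times$, a finite-cover argument gives $p\in P(A)$ and $\ve>0$ with $\gamma(t)+\wb B^p_\ve(0)\subseteq A^\times$ for all $t$). Hence $\Phi$, viewed as $\Evol\colon X\to C^1([0,1],A^\times)$, is $\C$-analytic, and so is $\evol=\ev_1\circ\Evol\colon X\to A^\times$, since the endpoint evaluation $\ev_1\colon C^1([0,1],A^\times)\to A^\times$ is the restriction of a continuous linear map. In case~(a) this is exactly strong $C^0$-regularity plus the claimed analyticity of $\Evol$. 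In case~(b): every $C^1$-curve in $A$ is Lipschitz (estimate the difference quotients by $\sup_t p(\gamma'(t))$ using the Fundamental Theorem of Calculus), and on $C^1([0,1],A)$ the topology refines $\cO_{C^0}$, so $\Evol$ and $\evol$ restrict to $\C$-analytic --- hence smooth --- maps on $C^1([0,1],A)$, giving strong $C^1$-regularity; composing instead with the continuous linear inclusion $(C^\infty([0,1],A),\cO_{C^0})\hookrightarrow X$ gives $C^0$-regularity, and the final assertion of~(b) is what was proved for $X=\Lip$.

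The main obstacle is the middle step. Because a general continuous inverse algebra contains one-sidedly invertible non-units, the companion curve $\delta_\gamma$ gives only a one-sided inverse of $\eta_\gamma(t)$ for free; promoting this to invertibility genuinely needs both the openness of $A^\times$ and the propagation of invertibility from $t=0$ via the connectedness argument, with the completeness-plus-($*$) machinery entering only through the global existence of $\delta_\gamma$ on $[0,1]$.
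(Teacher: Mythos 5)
Your proposal is correct, and it replaces the paper's central step by a genuinely different argument. The paper shows that $\Phi(\gamma)$ is $A^\times$-valued in two stages: since $\Phi$ is continuous with $\Phi(0)=1$ and $C^1([0,1],A^\times)$ is a neighbourhood of the constant curve $1$, evolutions exist for $\gamma$ in a $0$-neighbourhood $\Omega\sub X$; global existence is then obtained by a general subdivide-rescale-and-multiply patching principle for Lie groups (quoting \cite[Proposition~1.3.10]{Da2}), and the resulting $A^\times$-valued evolution is identified with $\Phi(\gamma)$ via the uniqueness Lemma~\ref{ridnoninv}. You instead build the candidate inverse curve $\delta_\gamma$ globally by running the Picard iteration of Lemma~\ref{noinverse} in the opposite algebra $A^{\op}$ (which inherits completeness and ($*$) since $\|\mu_n^{\op}\|_{p,q}=\|\mu_n\|_{p,q}$) applied to $-\gamma$, obtain $\eta_\gamma\delta_\gamma\equiv 1$ from the product rule, and promote the one-sided inverse to a two-sided one by the openness of $A^\times$ and a connectedness argument on $\{t:\delta_\gamma(t)\eta_\gamma(t)=1\}$ --- correctly flagging that one-sided invertibility alone does not suffice in a general continuous inverse algebra. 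Your route is more self-contained (no appeal to the external patching result, and Lemma~\ref{ridnoninv} becomes superfluous), at the cost of being specific to unit groups of algebras rather than reusing a general Lie-theoretic principle; the remaining bookkeeping (openness of $C^1([0,1],A^\times)$, restriction to $C^1$ and to $(C^\infty,\cO_{C^0})$, and the complexification step, where your bound $\|\mu_n^{A_\C}\|_{\wb p,\wb q}\leq 2^n\|\mu_n\|_{p,q}$ serves just as well as the paper's exact equality for verifying ($*$)) matches the paper.
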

\begin{proof}
If $A$ is sequentially complete, let $X:=C([0,1],A)$;
otherwise, let $X:=(\Lip([0,1],A),\cO_{C^0})$.\\[2.5mm]
We assume first that $\K=\C$.
Let $\Phi\colon X\to C^1([0,1],A)$
be the mapping provided by Lemma~\ref{noinverse}.
Note that $C^1([0,1],A^\times)\sub C^1([0,1],A)$ is an identity neighbourhood,
$\Phi(0)=1$ (cf.\ (\ref{sumform}))
and $\Phi$ is $\C$-analytic (see (d) or (d)$'$)
and hence continuous.
Therefore, there exists an open $0$-neighbourhood $\Omega\sub X$ such that
$\Phi(\Omega)\sub C^1([0,1],A^\times)$.
By Proposition~\ref{noinverse}\,(c), $\Evol(\gamma):=\Phi(\gamma)$ is an evolution
for $\gamma\in \Omega$. Moreover,
$\evol\colon \Omega\to A^\times$, $\gamma\mto \Evol(\gamma)(1)=\Phi(\gamma)(1)$
is $\C$-analytic, since $\Phi$ and the continuous linear
point evaluation $\ev_1\colon C^1([0,1],A)\to A$, $\zeta\mto\zeta(1)$ are
$\C$-analytic.\\[2.5mm]
If $A$ is sequentially complete,
Proposition 1.3.10 in \cite{Da2} now shows that $A^\times$ is strongly $C^0$-regular.\footnote{Compare
already \cite[p.\,409]{KaM} and
\cite[Lemma~3]{Rob} for similar arguments.}\\[2.5mm]
If $A$ is Mackey-complete,
we see as in the proof of \cite[Proposition~1.3.10]{Da2}
that each $\gamma\in \Lip([0,1],A)$
has an evolution $\Evol(\gamma)\in C^1([0,1],A^\times)$.\\[2.5mm]
In either case, we deduce with Lemmas \ref{noinverse}\,(c) and \ref{ridnoninv}
that $\Evol=\Phi$. As a consequence,
$\Evol\colon X\to C^1([0,1],A^\times)$
is $\C$-analytic. Thus (a) is established.
In the situation of (b),
note that also $\evol:=\ev_1\circ \Evol\colon \Lip([0,1],A) \to A^\times$
is $\C$-analytic.
The inclusion maps $(C^\infty([0,1],A),\cO_{C^0})\to (\Lip([0,1],A),\cO_{C^0})$
and $C^1([0,1],A)\to (\Lip([0,1],A),\cO_{C^0})$ being continuous linear and hence
$\C$-analytic, it follows that also the maps
$\evol\colon (C^\infty([0,1],A),\cO_{C^0})\to A^\times$
and
$\evol\colon C^1([0,1],A)\to A^\times$ are $\C$-analytic
and thus smooth.
Hence $A^\times$ is $C^0$-regular and strongly $C^1$-regular.\\[2.5mm]
If $\K=\R$, then also the complexification $A_\C$
of $A$ is a continuous inverse algebra (see, e.g., \cite[Proposition~3.4]{ALG})
with the same completeness properties.
In (a), we can identify $X_\C$ with $C^0([0,1],A_\C)$;
in the situation of~(b), we can identify $X_\C$ with $\Lip([0,1],A_\C)$.
For $p\in P(A)$, let $p_\C\in P(A_\C)$ be the seminorm defined via
\[
p_\C(a+ib):=\inf\big\{{\textstyle \sum_j}\, |z_j| p(x_j) \colon a+ib={\textstyle \sum_j}\, z_jx_j,\; x_j\in A, z_j\in \C\big\}
\]
for $a,b\in A$ (which satisfies $\max\{p(a),p(b)\}\leq p_\C(a+ib)\leq p(a)+p(b)$).
Then also $A_\C$ satisfies ($*$),
as $\|(\mu_n)_\C\|_{p_\C,q_\C}=\|\mu_n\|_{p,q}$.
Let $\Phi\colon X_\C\to C^1([0,1],A_\C)$
be the complex analytic map provided by Lemma~\ref{noinverse}
(applied to $A_\C$ in place of~$A$).
By the complex case just discussed,
\[
\Phi=\Evol_{(A_\C)^\times}\colon X_\C\to C^1([0,1],(A_\C)^\times).
\]
If $\gamma\in X$,
then $\Phi(\gamma)$ takes only values in the closed vector subspace~$A$
of $A_\C=A\oplus i A$, as is clear from~(\ref{sumform}).
Hence $\Phi(\gamma)\in C^1([0,1],A)$ (see \cite{GaN} or \cite[Lemma~10.1]{BGN})
and thus $\Phi(\gamma)\in C^1([0,1],A^\times)$,
using the fact that $A\cap (A_\C)^\times= A^\times$
for any unital algebra.\footnote{If $x,a,b\in A$ and $x(a+ib)=(a+ib)x=1$, then $xa+ixb=1$ and $ax+ibx=1$.
Hence $xa=ax=1$, i.e., $x^{-1}=a\in A$.}
We deduce that $\Phi|_X\colon X\to C^1([0,1],A^\times)$ is the evolution map
$\Evol_{A^\times}$ of $A^\times$. Note that $\Evol_{A^\times}$ is $\R$-analytic,
because $\Phi\colon X_\C\to C^1([0,1],A)_\C$ is a $\C$-analytic extension
of $\Evol_{A^\times}$. Since evaluation $\ev_1\colon C^1([0,1],A)\to A$,
$\zeta\mto\zeta(1)$ is continuous linear and hence $\R$-analytic,
also $\evol_{A^\times}:=\ev_1\circ \Evol_{A^\times}\colon X\to A^\times$
is $\R$-analytic (and hence smooth). In the situation
of (a), this completes the proof.
In~(b), compose $\evol_{A^\times}$
with the continuous linear inclusion maps
$C^1([0,1],A)\to \Lip([0,1],A)$ (resp., $(C^\infty([0,1],A),\cO_{C^0})\to \Lip([0,1],A)$)
to see that also the evolution map on $C^1([0,1],A)$ (resp., on
$(C^\infty([0,1],A^\times),\cO_{C^0})$) is $\R$-analytic and hence~$C^\infty$.
\end{proof}
\appendix
\section{Proofs for the lemmas from Section~\ref{secone}}
It is useful to recall that a locally convex space~$E$ is Mackey-complete
(in the sense recalled in the introduction)
if and only if every Mackey-Cauchy sequence in~$E$ converges,
i.e., every sequence $(x_n)_{n\in \N}$ in $E$ for which there exists a bounded
subset $B\sub E$ and a double sequence $(r_{n,m})_{n,m\in \N}$
of real numbers $r_{n,m}\geq 0$ such that $x_n-x_m\in r_{n,m}B$
for all $n,m\in \N$, and $r_{n,m}\to 0$ as both $n,m\to\infty$
(cf.\ \cite[Theorem~2.14]{KaM}).\\[3mm]
{\bf Proof of Lemma~\ref{special}.}
Given $x\in U$, there exists $r\in \,]1,\infty[$
such that $rx\in U$. Thus $\sum_{n=0}^\infty r^np_n(x)$ converges
and hence $C:=\{r^np_n(x)\colon n\in \N_0\}$ is a bounded
subset of~$F$. Then also the absolutely convex hull $B$ of $C$ is bounded.
For all $n,m\in \N_0$,
we have
\begin{eqnarray*}
\sum_{k=0}^{n+m}p_k(x)-\sum_{k=0}^np_k(x) &= & \sum_{k=n+1}^{n+m}p_k(x)
\,=\, r^{-n-1}\sum_{k=n+1}^{n+m} r^{n+1-k}r^kp_k(x)\\
&\in&  r^{-n-1}\left(\sum_{j=0}^{m-1}(1/r)^j\right)B\sub\frac{r^{-n-1}}{1-\frac{1}{r}}B.
\end{eqnarray*}
Hence $(\sum_{k=0}^np_k(x))_{n\in \N_0}$
is a Mackey-Cauchy sequence in $F$ and hence convergent.
Thus $f(x)\in F$. Note that $f$ is complex
analytic as a map to $\wt{F}$, by \cite[Theorems~5.1 and 6.1\,(i)]{BaS}.
Hence, if $x\in U$, then $f(x+y)=\sum_{n=0}^\infty\frac{1}{n!}\delta^n_x(f)(y)$
for all $y$ in some $0$-neighbourhood,
where $\delta^n_xf(y):=d^{(n)}f(x,y,\ldots,y)$ is the $n$-th G\^{a}teaux differential of
$f$ at~$x$.
Given $y\in E$, there is $s>0$ such that $x+zy\in U$ for all $z\in \C$ such that $|z|\leq s$.
For each $n\in \N_0$,
Cauchy's Integral Formula for higher derivatives now shows that 
\[
\delta^n_x(f)(y)=\frac{n!}{2\pi i}\int_0^{2\pi}\frac{f(x+se^{it}y)}{(se^{it})^{n+1}}\, sie^{it}dt,
\]
which lies in~$F$ since the integrand is a Lipschitz curve in~$F$
and~$F$ is Mackey-complete.\footnote{The integrand
is a $C^\infty$-curve in $\wt{F}$ and hence a Lipschitz curve in $\wt{F}$,
with image in~$F$.}
Hence each $\delta^n_x(f)$ is a continuous homogeneous
polynomial from $E$ to $F$ and thus $f$ is complex
analytic as a map from~$E$ to~$F$.\vspace{3mm}\Punkt

\noindent
{\bf Proof of Lemma~\ref{embfuncs}.}
Let $p$ be a continuous seminorm on~$F$ and $k\in \N_0$
such that $k\leq r$. Then $q:=p\circ \alpha$ is a continuous seminorm on~$E$.
Let $\gamma\in C^r([0,1],E)$.
For each $j\in \N_0$ such that $j\leq k$,
we have $(\alpha\circ \gamma)^{(j)}=\alpha\circ \gamma^{(j)}$
and thus $\|(\alpha\circ\gamma)^{(j)}\|_{C^0,p}
=\|\alpha\circ\gamma^{(j)}\|_{C^0,p}
=\|\gamma^{(j)}\|_{C^0,p\circ\alpha}
=\|\gamma^{(j)}\|_{C^0,q}$,
entailing that $\|\alpha\circ \gamma\|_{C^k,p}=\|\gamma\|_{C^k,q}$.
Hence $\alpha_*$ is continuous.

If $\alpha$ is an embedding and $Q$ a continuous seminorm on $C^r([0,1],E)$,
then there exists $k\in \N_0$ such that $k\leq r$ and a continuous seminorm~$q$
on~$E$ such that $Q\leq \|.\|_{C^k,q}$.
Because $\alpha$ is an embedding, there exists a continuous seminorm~$p$ on~$F$
such that $p(\alpha(x))\geq q(x)$ for all $x\in E$ (because $\alpha^{-1}$ is continuous linear).
Hence $\|(\alpha\circ\gamma)^{(j)}\|_{C^0,p}
=\|\gamma^{(j)}\|_{C^0,p\circ \alpha}\geq \|\gamma^{(j)}\|_{C^0,q}$
for each $j\in \N_0$ such that $j\leq k$ and thus
$\|\alpha\circ \gamma\|_{C^k,p}\geq \|\gamma\|_{C^k,q}\geq Q(\gamma)$,
entailing that $\alpha_*$ is a topological embedding.\vspace{3mm}\Punkt

\noindent
{\bf Proof of Lemma~\ref{cpro}.}
Let $p\in P(E)$ and $k\in \N_0$ such that $k\leq r$.
Since $p=\|.\|_p\circ\pi_p$, we have
\[
\|(\pi_p\circ \gamma)^{(j)}\|_{C^0,\|.\|_p}=
\|\pi_p\circ \gamma^{(j)}\|_{C^0,\|.\|_p}=
\|\gamma^{(j)}\|_{C^0,\|.\|_p\circ \pi_p}
=\|\gamma^{(j)}\|_{C^0,p}
\]
for each $\gamma\in C^r([0,1],E)$ and $j\in \{0,1,\ldots, k\}$,
entailing that $\|\pi_p(\gamma)\|_{C^k,\|.\|_p}=\|\gamma\|_{C^k,p}$.
The assertion follows.\Punkt

\begin{rem}\label{redorder}
Before we turn to the proof of Lemma~\ref{funmac},
it is useful to record some simple observations.
\begin{itemize}
\item[(a)]
It is clear from the definitions that the map
\[
h\colon C^k([0,1],E)\to C([0,1],E)\times C^{k-1}([0,1],E)
\,,\quad \gamma\mto (\gamma,\gamma')
\]
is linear and a homeomorphism onto its image, for each $k\in \N$.
\item[(b)]
The image $\im(h)$ of $h$ consists of all $(\gamma,\eta)$ such that $\gamma(t)=\gamma(0)+\int_0^t\eta(s)\,ds$
for each $t\in [0,1]$. Since point evaluations and the linear maps $\eta\mto\int_0^t\eta(s)\, ds$
(with $p(\int_0^t\eta(s)\, ds)\leq \|\eta\|_{C^0,p}$) are continuous, it follows that
$\im(h)$ is a closed vector subspace of $C([0,1],E)\times C^{k-1}([0,1],E)$.
\end{itemize}
\end{rem}
\noindent
{\bf Proof of Lemma~\ref{funmac}.}
Because direct products of Mackey-complete\linebreak
locally convex spaces are Mackey-complete,
and so are closed vector\linebreak
subspaces, also
projective limits of Mackey-complete locally convex spaces are Mackey-complete.
Since $C^\infty([0,1],E)=\pl\,C^k([0,1],E)$\vspace{-.7mm}
(with the\linebreak
respective
inclusion maps as the limit maps), we therefore
only need to prove Mackey-completeness if $k:=r\in \N_0$.
Likewise in the case of completeness.\vspace{.5mm}

The case $k=0$. If $E$ is complete, then also $C([0,1],E)$ is complete, as is well known
(cf.\ \cite[Chapter~7, Theorem~10]{Kel}).
If $E$ is merely Mackey-complete,
let $\wt{E}$ be a completion of $E$ which contains $E$.
Then $C([0,1],\wt{E})$ is complete.
The inclusion map $\phi\colon C([0,1],E)\to C([0,1],\wt{E})$ is a topological
embedding, by Lemma~\ref{embfuncs}.
If $(\gamma_n)_{n\in \N}$ is a Mackey-Cauchy sequence in $C([0,1],E)$,
then $(\phi\circ\gamma_n)_{n\in \N}=(\gamma_n)_{n\in \N}$
is a Mackey-Cauchy sequence in $C([0,1],\wt{E})$,
hence convergent to some $\gamma\in C([0,1],\wt{E})$.
For each $t\in [0,1]$, the point evaluation $\ve_t\colon C([0,1],\wt{E})\to \wt{E}$,
$\eta\mto\eta(t)$ is continuous and linear.
Hence $(\gamma_n(t))_{n\in \N}$ is
a Mackey-Cauchy sequence in $E$ and hence convergent
in~$E$. Since $\gamma_n(t)=\ve_t(\gamma_n)\to\ve_t(\gamma)=\gamma(t)$,
we deduce that $\gamma(t)\in E$.
Hence $\gamma\in C([0,1],E)$ and it is clear that $\gamma_n\to \gamma$ in $C([0,1],E)$.

Induction: If $C^{k-1}([0,1],E)$ is (Mackey-) complete, then also $C^k([0,1],E)$,
as it is isomorphic to a closed vector subspace of the (Mackey-) complete
direct product $C([0,1],E)\times C^{k-1}([0,1],E)$ (see Remark~\ref{redorder}\,(b)).\vspace{3mm}\Punkt

\noindent
\emph{Acknowledgement.}
The research was supported by DFG, grant GL 357/5-1.\vspace{-2mm}
{\small
Helge Gl\"{o}ckner, Universit\"{a}t  Paderborn, Institut f\"ur Mathematik,\\
Warburger Str.\ 100, 33098 Paderborn, Germany;\,{\tt  glockner\at{}math.upb.de}\\[2.5mm]
%
%
Karl-Hermann Neeb, FAU Erlangen-N\"{u}rnberg, Department Mathematik,\\
Cauerstr.\ 11, 91058 Erlangen, Germany;\\
{\tt karl-hermann.neeb\at{}math.uni-erlangen.de}}
\end{document}